\documentclass[10pt,reqno]{amsart}
\vfuzz=5pt \hfuzz=3pt
\usepackage{color}

\newtheorem{theorem}{Theorem}

\newtheorem{proposition}[theorem]{Proposition}

\newtheorem{remark}{Remark}

\newfont{\bb}{msbm10 at 12pt}

\def\bg{\overline{g}}
\def\wbg{\widetilde{\overline{g}}}

\def\<{\langle}     
\def\>{\rangle}


\newcommand{\bal}{\begin{align}}      \newcommand{\eal}{\end{align}}
\newcommand{\ba}{\begin{array}}      \newcommand{\ea}{\end{array}}
\newcommand{\bc}{\begin{center}}     \newcommand{\ec}{\end{center}}
\newcommand{\be}{\begin{enumerate}}  \newcommand{\ee}{\end{enumerate}}
\newcommand{\beQ}{\begin{eqnarray*}} \newcommand{\eeQ}{\end{eqnarray*}}
\newcommand{\bi}{\begin{itemize}}    \newcommand{\ei}{\end{itemize}}
\newcommand{\bt}{\begin{tabular}}    \newcommand{\et}{\end{tabular}}
\newcommand{\bdm}{\begin{displaymath}} \newcommand{\edm}{\end{displaymath}}

\newcommand{\ve}{\varepsilon}


\def\qed{\hfill{q.e.d.}\smallskip\smallskip}


\begin{document}
\title[]{A remark on the rigidity of conformally compact Poincar\'e-Einstein manifolds}

\author{Simon Raulot}
\address{Laboratoire de Math\'ematiques R. Salem
UMR $6085$ CNRS-Universit\'e de Rouen
Avenue de l'Universit\'e, BP.$12$
Technop\^ole du Madrillet
$76801$ Saint-\'Etienne-du-Rouvray, France.}
\email{simon.raulot@univ-rouen.fr}

\begin{abstract}
In this paper, we give an optimal inequality relating the relative Yamabe invariant of a certain compactification of a conformally compact Poin\-car\'e-Ein\-stein manifold with the Yamabe invariant of its boundary at infinity. As an application, we obtain an elementary proof of the rigidity of the hyperbolic space as the only conformally compact Poincar\'e-Einstein manifold with the round sphere as its conformal infinity.
\end{abstract}

\subjclass[2010]{Differential Geometry, 53C24, 53C80}

\maketitle 
\pagenumbering{arabic}
\date{\today}   
 

\section{Introduction}


A conformally compact manifold $(X^n,g_+)$ is a connected complete Riemannian manifold whose metric extends conformally to a compact manifold $(\overline{X}^n,\bg)$ with boundary $\Sigma^{n-1}:=\partial\overline{X}^n$ whose interior is the original manifold $X^n$. So, through this extended conformal metric, the corresponding original metric determines a conformal structure $[\gamma]:=[\bg_{|\Sigma}]$ on the boundary, which is usually called the conformal infinity or the boundary at infinity. A particularly interesting class of conformally compact manifolds are the Poincar{\'e}-Einstein spaces, characterized by the fact that the manifold is Einstein which means that, up to a rescaling of the metric, its Ricci tensor satisfies the Einstein equation 
\begin{eqnarray}\label{EinsteinCond}
Ric_{g_+}=-(n-1)g_+.
\end{eqnarray}

In the context of Poincar\'e-Einstein manifolds, Andersson and 
Dahl \cite{AD} proved that, if the manifold is spin and its conformal infinity is an $(n-1)$-dimensional round sphere, then its only possible conformal compactification is the $n$-dimensional hyperbolic space. Their method relies on some generalization of the positive mass theorem in the context of asymptotically hyperbolic metrics. Since then, this approach leads to several generalizations of this result (see \cite{ChH,W} for example) in the context of spin manifolds. 

Without the spin assumption, the first significant result were obtained by 
Qing \cite{Q}. In this work, he used the existence of certain positive eigenfunctions of the Laplacian in order to build a complete non-compact asymptotically Euclidean Riemannian manifold with boundary, with non-negative integrable scalar curvature. Then he applied the positive mass theorem to the double manifold obtained by gluing through their boundaries two copies of such manifolds with boundary, proving that its ADM mass is zero. In this way, he could drop the spin assumption for dimensions between $3$ and $6$.

More recently, the rigidity of the hyperbolic as the only conformally compact Poincar\'e-Einstein manifold with the round sphere as its conformal infinity has been proved for more general manifolds and without the spin assumption. A rigorous proof of this fact appears recently in the work of Li, Qing and Shi \cite{LQS}. In fact, based on the idea in \cite{DJ} they manage to give a complete proof of a {\it relative volume inequality} on conformally compact manifolds which implies, among other things, the rigidity of the hyperbolic space. For Poincar\'e-Einstein manifolds, this result reads as
\begin{theorem}\label{RigHyp}
Suppose that $(X^n,g_+)$ is a conformally compact Einstein manifold with the round sphere as its conformal infinity. Then $(X^n,g_+)$ has to be the hyperbolic space. 
\end{theorem}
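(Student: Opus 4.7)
The plan is to establish an optimal inequality between the relative Yamabe invariant of a well-chosen conformal compactification of $(X^n, g_+)$ and the Yamabe invariant of the conformal infinity $(\Sigma, [\gamma])$, and then to exploit the equality case to deduce rigidity.

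First, I would choose a geodesic defining function $\rho$ for $\Sigma$ in $\overline{X}$, so that the compactified metric $\bg := \rho^2 g_+$ satisfies $|d\rho|_{\bg} = 1$ in a neighbourhood of $\Sigma$. Since $(X^n, g_+)$ is Einstein, the asymptotic expansion of $\bg$ near $\Sigma$ is highly constrained by \eqref{EinsteinCond}, and one can compute explicitly both the scalar curvature $R_{\bg}$ of $\bg$ and the mean curvature $H_{\bg}$ of $\Sigma \subset (\overline{X}, \bg)$. The key point is to verify that for this compactification $R_{\bg}$ is non-negative (or suitably signed) and $H_{\bg}$ has a definite sign, so that an Escobar-type Yamabe comparison is available.

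The central step is to prove an optimal inequality of the form
$$Y(\overline{X}, \Sigma, [\bg]) \leq Y(\Sigma, [\gamma]),$$
where the left-hand side is the relative Yamabe invariant of Escobar type (involving both the bulk scalar curvature and the boundary mean curvature) and the right-hand side is the standard Yamabe invariant of the conformal infinity. The strategy is to test the relative Yamabe functional against conformal factors that concentrate near $\Sigma$ and mimic extremizers of the boundary Yamabe problem, using the asymptotic structure of $g_+$ dictated by \eqref{EinsteinCond} to control the bulk contributions. A direct computation on the hyperbolic ball will show that this inequality is saturated in the model case.

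When $[\gamma]$ is the round conformal class on $S^{n-1}$, the right-hand side equals the classical Yamabe invariant $Y(S^{n-1})$, and by comparison with the hyperbolic model one sees that the inequality must in fact be an equality. The main obstacle, and the most delicate part of the argument, will be the analysis of the equality case: one must show that saturation of the optimal inequality forces the compactification $(\overline{X}, \bg)$ to be conformally equivalent to the round Euclidean ball, after which the Einstein condition \eqref{EinsteinCond} uniquely reconstructs $g_+$ as the hyperbolic metric. This rigidity step is expected to rely on the classification of extremizers of the Escobar functional on the ball, together with a conformal unique continuation argument for the Einstein equation.
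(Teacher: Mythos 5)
There is a genuine gap, and it sits at the heart of your plan: you only ever produce an \emph{upper} bound on the relative Yamabe invariant, and an upper bound cannot be pinched against anything. Testing the Escobar functional with conformal factors concentrating near the boundary is exactly how Escobar proves $Y(\overline{X},\Sigma,[\bg])\leq Y(\overline{B}^n,\mathbb{S}^{n-1},[g_0])$ — a bound depending only on the local model, which holds for \emph{every} compact manifold with boundary and uses nothing about the Einstein condition (bulk contributions of concentrating test functions vanish in the limit, so \eqref{EinsteinCond} only enters at lower order). Your claim that ``by comparison with the hyperbolic model one sees that the inequality must in fact be an equality'' is not an argument: saturation in the model says nothing about an arbitrary $X$. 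What is missing is the complementary \emph{lower} bound coming from the Einstein structure, which in the paper takes the form $Y(\overline{X},\Sigma,[\bg])^2\geq 4\frac{n-1}{n-2}\,Y(\Sigma,[\gamma])$ (note the square — your proposed inequality $Y(\overline{X},\Sigma,[\bg])\leq Y(\Sigma,[\gamma])$ is not even the right-shaped relation between these two invariants). It is the conjunction of this Einstein-driven lower bound with Escobar's universal upper bound \eqref{EscIn} that forces equality when $\Sigma$ is the round sphere.

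The mechanism for the lower bound is also quite different from anything in your proposal, and your choice of compactification would break it. Instead of a geodesic defining function, the paper solves the Cherrier--Escobar problem (Theorem \ref{CherrierEscobar}, via Proposition \ref{AdaptedMetric}) to get a compactification with $R_{\bg}=0$, constant mean curvature $H_{\bg}=\frac{1}{2(n-1)}Y(\overline{X},\Sigma,[\bg])$, and $Vol(\Sigma,\bg)=1$; these normalizations are essential. One then runs an Obata-type divergence identity for the Einstein tensor: since $g_+$ is Einstein, $E_{\bg}=-(n-2)\rho^{-1}\bigl(\nabla^2_{\bg}\rho-\frac1n(\Delta_{\bg}\rho)\bg\bigr)$, and integrating $\langle E_{\bg},\rho^{-1}\cdot\rho E_{\bg}\rangle$ by parts (the interior term vanishes by the contracted Bianchi identity because $R_{\bg}=0$) yields $\int_X|E_{\bg}|^2_{\bg}\rho\,dv_{\bg}=(n-2)\int_\Sigma E_{\bg}(N,N)\,ds_{\bg}$, after which the Gauss equation converts $E_{\bg}(N,N)$ into $\frac12\bigl((n-1)(n-2)H_{\bg}^2-R_\gamma\bigr)$ and the definition of $Y(\Sigma,[\gamma])$ gives the inequality. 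Rigidity then falls out of the equality case of this \emph{identity} ($E_{\bg}=0$ and $\nabla^2_{\bg}\rho=\frac1n(\Delta_{\bg}\rho)\bg$, whence $(\overline{X},\bg)$ is the Euclidean ball by the Reilly-type argument of \cite{WX}) — no classification of Escobar extremizers and no unique continuation for the Einstein equation is needed. Those tools belong to the older approaches (\cite{Q,LQS}); the point of this paper is precisely that the integral identity replaces them.
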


In this note, we use the recent approach of Gursky and Hang \cite{GH} to relate the Yamabe invariant of a certain compactification of the Poincar\'e-Einstein manifold with the Yamabe invariant of the boundary for the induced conformal class.  In fact, Gursky and Hang use the resolution of a certain Yamabe problem on $\overline{X}^n$ to prove an inequality which relates these two conformal invariant. The Yamabe-type problem involved in their paper was first study by Escobar \cite{E3} and consists to prove on a compact manifold with boundary, the existence, of a smooth conformally related metric with constant scalar curvature and minimal boundary. From their results, they deduce the existence of infinitely many positive conformal classes on the seven dimensional sphere which cannot be the conformal infinity of a Poincar\'e-Einstein metric on the eight dimensional ball. 

Here we use an other Yamabe-type problem (see Section \ref{CherrierEscobar} for the definitions) and obtain:
\begin{theorem}\label{main}
Let $(X,g_+)$ be a Poincar\'e-Einstein manifold of class $C^2$ with a smooth representative in its conformal infinity $\Sigma$. Assume also that the Yamabe invariant $Y(\Sigma,[\gamma])$ is non-negative. If the dimension $n\geq 4$, then 
\begin{eqnarray*}
Y(\overline{X},\Sigma,[\bg])^2\geq 4\frac{n-1}{n-2}Y(\Sigma,[\gamma]).
\end{eqnarray*}
If $n\geq 3$, then
\begin{eqnarray*}
Y(\overline{X},\Sigma,[\bg])^2\geq 32\pi\chi(\Sigma).
\end{eqnarray*}
Equality occurs if and only if $(X^n,g_+)$ is isometric to the standard hyperbolic space. 
\end{theorem}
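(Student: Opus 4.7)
The plan is to combine the resolution of a Cherrier-Escobar-type Yamabe problem on the compactification $(\overline{X},[\bg])$ with the Einstein equation on $(X,g_+)$, and then to compare the boundary geometry of the resulting representative against the Yamabe invariant of the conformal infinity. Since $Y(\Sigma,[\gamma])\geq 0$, standard theory (due to Escobar and Cherrier) should guarantee that $Y(\overline{X},\Sigma,[\bg])$ is attained by a smooth conformal representative $\bg' = \vf^{4/(n-2)}\bg \in [\bg]$ satisfying $R_{\bg'}\equiv 0$ in $X$ and $H_{\bg'}\equiv c$ constant on $\Sigma$. Writing $\gamma' = \bg'|_\Sigma$, evaluation of the Yamabe quotient at $\bg'$ itself gives a formula of the form
\begin{equation*}
Y(\overline{X},\Sigma,[\bg]) = 2(n-1)\,c\,\mathrm{vol}(\Sigma,\gamma')^{1/(n-1)}.
\end{equation*}

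Next, I would write $\bg'=\rho^2 g_+$ for a smooth defining function $\rho>0$ and exploit the Einstein equation $\mathrm{Ric}_{g_+}=-(n-1)g_+$. Taking the trace of the conformal transformation formula for the Ricci tensor, the condition $R_{\bg'}=0$ is equivalent to
\begin{equation*}
\rho\,\overline{\Delta}\rho = \tfrac{n}{2}\big(|d\rho|^2_{\bg'}-1\big)\quad\text{in }X,
\end{equation*}
so in particular $|d\rho|^2_{\bg'}=1$ on $\Sigma$. The non-traced identity then furnishes $\mathrm{Ric}_{\bg'}(\nu,\nu)$ on $\Sigma$ as a local expression in $c$. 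Inserting this into the Gauss equation
\begin{equation*}
R_{\gamma'}=-2\,\mathrm{Ric}_{\bg'}(\nu,\nu)+c^2-|A_{\bg'}|^2,
\end{equation*}
and applying the Cauchy-Schwarz bound $|A_{\bg'}|^2\geq c^2/(n-1)$ (saturated iff $\Sigma$ is totally umbilic in $\bg'$), integration over $\Sigma$ should produce the key estimate
\begin{equation*}
\int_\Sigma R_{\gamma'}\,dA_{\gamma'} \leq (n-1)(n-2)\,c^2\,\mathrm{vol}(\Sigma,\gamma').
\end{equation*}

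To conclude for $n\geq 4$, I would test the boundary Yamabe functional at the constant $\psi\equiv 1$, which yields
\begin{equation*}
Y(\Sigma,[\gamma])\,\mathrm{vol}(\Sigma,\gamma')^{(n-3)/(n-1)} \leq \int_\Sigma R_{\gamma'}\,dA_{\gamma'}.
\end{equation*}
Combining this with the estimate above and with the Step~1 formula, and eliminating $c$ and $\mathrm{vol}(\Sigma,\gamma')$, yields exactly $Y(\overline{X},\Sigma,[\bg])^2\geq 4\tfrac{n-1}{n-2}Y(\Sigma,[\gamma])$. For $n=3$, the boundary is two-dimensional and the Gauss-Bonnet identity $\int_\Sigma R_{\gamma'}\,dA_{\gamma'}=4\pi\chi(\Sigma)$ replaces the boundary Yamabe inequality, producing $Y(\overline{X},\Sigma,[\bg])^2\geq 32\pi\chi(\Sigma)$.

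Equality in the chain forces saturation of Cauchy-Schwarz (hence $\Sigma$ is totally umbilic in $\bg'$) and constancy of the minimizer of $Y(\Sigma,[\gamma])$ (so $[\gamma]$ is the round conformal class when $n\geq 4$); combined with the Einstein equation and the PDE for $\rho$, these conditions should identify $\bg'$ with the flat metric on the unit ball, and hence $(X^n,g_+)$ with the standard hyperbolic space. The main technical obstacle will be Step~2: extracting $\mathrm{Ric}_{\bg'}(\nu,\nu)$ along $\Sigma$ as a clean pointwise expression in $c$, since the scalar-flat defining function is not in general the geodesic defining function and its boundary asymptotics must be controlled directly via the PDE above together with its first-order normal derivative.
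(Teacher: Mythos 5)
Your Steps 1 and 3 are sound and coincide with the paper's setup: the scalar-flat, constant-mean-curvature compactification $\bg'=\rho^2g_+$ furnished by the Cherrier--Escobar theorem (Mayer--Ndiaye), the evaluation $Y(\overline{X},\Sigma,[\bg])=2(n-1)H_{\bg'}\,\mathrm{vol}(\Sigma,\gamma')^{1/(n-1)}$, the test of the boundary Yamabe functional at $\psi\equiv 1$ for $n\geq 4$, and Gauss--Bonnet for $n=3$. The genuine gap is Step 2, and it is not merely the ``technical obstacle'' you flag at the end --- it is structurally impossible as stated. The boundary of any conformal compactification of a Poincar\'e--Einstein metric is automatically totally umbilic (this follows from the conformal transformation law of the Ricci tensor, as the paper notes), so your Cauchy--Schwarz step $|A_{\bg'}|^2\geq c^2/(n-1)$ is an identity and yields nothing; the Gauss equation then gives, pointwise,
\begin{equation*}
Ric_{\bg'}(\nu,\nu)=\tfrac{1}{2}\big((n-1)(n-2)H_{\bg'}^2-R_{\gamma'}\big),
\end{equation*}
which involves the generally non-constant scalar curvature $R_{\gamma'}$ of the boundary. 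If $Ric_{\bg'}(\nu,\nu)$ were a local expression in $c$ alone, $R_{\gamma'}$ would be forced to be constant, which is false in general. Consequently your ``key estimate'' $\int_\Sigma R_{\gamma'}\,dA_{\gamma'}\leq(n-1)(n-2)H_{\bg'}^2\,\mathrm{vol}(\Sigma,\gamma')$ is \emph{exactly equivalent} to $\int_\Sigma Ric_{\bg'}(\nu,\nu)\,ds\geq 0$, which is the entire analytic content of the theorem; no boundary-local analysis of the defining function can produce this sign, because the relevant information sits at third order in Graham's expansion $\rho=r-\frac{H_{\bg}}{2}r^2+\frac{R_\gamma}{6(n-2)}r^3+O(r^{3+\alpha})$, where $R_\gamma$ itself enters --- the local computation merely reproduces the Gauss identity above.

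The missing idea is a global, Obata-type integral identity (this is how the paper, following Gursky--Han, proceeds). Since $g_+$ is Einstein and $R_{\bg'}=0$, the Einstein tensor satisfies $E_{\bg'}=-(n-2)\rho^{-1}\big(\nabla^2_{\bg'}\rho-\frac{1}{n}(\Delta_{\bg'}\rho)\bg'\big)$; pairing with $\rho\,E_{\bg'}$, integrating over $X_\ve=\{d_{\bg'}(\cdot,\Sigma)\geq\ve\}$, integrating by parts using the contracted second Bianchi identity ($\mathrm{div}\,E_{\bg'}=0$ because $R_{\bg'}\equiv 0$), and letting $\ve\to 0$ --- here one needs only $\rho=r+O(r^2)$ and $\nabla^{\bg'}\rho=\partial_r+O(r)$ --- gives
\begin{equation*}
\int_X|E_{\bg'}|^2_{\bg'}\,\rho\,dv_{\bg'}=(n-2)\int_\Sigma E_{\bg'}(N,N)\,ds_{\bg'}\geq 0,
\end{equation*}
since $\rho>0$ in $X$ and $E_{\bg'}(N,N)=Ric_{\bg'}(\nu,\nu)$. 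This supplies precisely the sign your chain is missing, and it also drives the rigidity, which your sketch cannot reach: equality forces $E_{\bg'}\equiv 0$, hence $\nabla^2_{\bg'}\rho=\frac{1}{n}(\Delta_{\bg'}\rho)\bg'$ with $\Delta_{\bg'}\rho$ a nonzero constant, and a Reilly/Obata-type argument (as in Wang--Xia) identifies $(\overline{X},\bg')$ with the Euclidean unit ball, so $g_+$ is hyperbolic. Note also that equality in your Step 3 only makes $\gamma'$ a constant-scalar-curvature Yamabe representative, not the round metric, so your equality discussion would be incomplete even granting Step 2.
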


As a direct consequence we obtain a new, simple and direct proof of Theorem \ref{RigHyp}. \\

Finally, we want to emphasize that, during the redaction of this work, it was pointed out that similar results were obtained independently by Chen, Lai and Wang in \cite{CLW}. 


\section{Poincar\'e-Einstein manifolds}


In this section, we collect basic facts on conformally compact manifolds. For more details on this subject, the reader could consult \cite{An,AH,AD,ChH,LQS,Q,W}. 

Let $X^n$ be the interior of a smooth compact manifold $\overline{X}^n$ with boundary $\Sigma=\partial\overline{X}^n$. A Riemannian metric $g_+$ is said to be conformally compact of class $C^{k,\alpha}$ if, for a smooth defining function $\rho$ for the boundary $\Sigma$ in $\overline{X}^n$, the metric $\bg=\rho^2g_+$ can be extended to a $C^{k,\alpha}$ Riemannian metric on $\overline{X}^n$. Recall that a smooth defining function $\rho$ for the boundary $\Sigma$ in $\overline{X}^n$ is a smooth nonnegative function from $\overline{X}$ such that 
\begin{itemize}
\item $\rho>0$ on the interior $X^n$;
\item $\rho=0$ on the boundary $\Sigma^{n-1}$;
\item $d\rho\neq 0$ on the boundary $\Sigma^{n-1}$. 
\end{itemize}

The compactification $\bg$ induces a metric $\gamma$ on the boundary $\Sigma$ which changes conformally when the defining function $\rho$ changes. Then a conformally compact metric $g_+$ always induces a conformal structure $[\gamma]$ on the boundary $\Sigma$ and the conformal manifold $(\Sigma,[\gamma])$ is called the conformal infinity of the conformally compact manifold $(X^n,g_+)$.

In the following we will always assume that the metric $g_+$ is an Einstein metric that is its Ricci tensor satisfies (\ref{EinsteinCond}). Such manifolds are called Poincar\'e-Einstein manifolds. It is very instructive for the following to remark that the Ricci tensor of $g_+$ and the Ricci tensor of any compactification $\bg=\rho^2g_+$ are related by 
\begin{eqnarray*}
Ric_{\bg}=-(n-2)\rho^{-1}\nabla_{\bg}^2\rho+\Big((n-1)\rho^{-2}(|\nabla^{\bg}\rho|^2-1)-\rho^{-1}\Delta_{\bg}\rho\Big)\bg
\end{eqnarray*}
since $g_+$ and $\bg$ are conformal (see \cite{Be} for example). Among other things, we can first deduce that any defining function $\rho$ of $\Sigma$ in $X$ satisfies $|\nabla^{\bg}\rho|_{\bg|\Sigma}=1$. Then since the second fundamental form of $\Sigma$ in $(\overline{X}^n,\bg)$ is $\nabla_{\bg}^2\rho$, we immediately get that the boundary has to be totally umbilic that is there exists a smooth function on $\Sigma$ such that $\nabla_{\bg}^2\rho=f\gamma$. It is crucial for our approach to note that this property is invariant under a conformal change of the metric $\bg$.


\section{The Riemannian mapping problem of Cherrier-Escobar}\label{CherrierEscobar}


In this section, we recall the problem of the existence on a given compact Riemannian manifold $(\overline{X}^{n},\bg)$ with boundary of a metric conformally related to $g$ with zero scalar curvature and constant mean curvature? This problem has been introduced and solve in many cases by Escobar \cite{E} although it was also addressed by Cherrier \cite{Ch} in a more general manner. In fact, Escobar proved the existence of such a metric when 
\begin{itemize}
\item $n=3,4$;
\item $n=5$ and the boundary is umbilic; 
\item $n\geq 6$ and the boundary is not umbilic;
\item $(\overline{X}^n,g)$ is locally conformally flat and the boundary is umblic. 
\end{itemize}
Later on, Marques \cite{M1,M2}, Almaraz \cite{A} and Chen \cite{C} solve the problem in many situations left by Escobar and in addition to this, Chen reduces the remaining cases to the positivity of the ADM mass of some class of asymptotically flat Riemannian manifolds. However, this positive mass theorem is not known to hold. However, very recently, Mayer and Ndiaye \cite{MN} proved all these cases left open by using the algebraic topological argument of Bahri-Coron \cite{BC} and then were allowed to settle the Riemannian mapping theorem of Cherrier and Escobar, namely
\begin{theorem}\label{CherrierEscobar}
Every $n$-dimensional compact Riemannian manifold with boundary, $n\geq 3$, and finite relative Yamabe invariant, carries a conformal scalar flat Riemannian metric with constant mean curvature on the boundary. 
\end{theorem}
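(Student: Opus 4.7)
The plan is to set up Theorem \ref{CherrierEscobar} as a variational problem for the Cherrier-Escobar energy, and then to pursue the classical Yamabe-type strategy of reducing the existence question to a strict inequality at the threshold level, filling in the cases one by one.

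First I would pass to the PDE formulation. Writing the conformal factor as $u^{4/(n-2)}$, producing a metric $\tilde{g}=u^{4/(n-2)}\bg$ that is scalar-flat in the interior and of constant mean curvature on the boundary is equivalent to finding a positive solution to
\begin{align*}
-\tfrac{4(n-1)}{n-2}\Delta_{\bg} u + R_{\bg}\,u &= 0 \quad \text{in } X, \\
\tfrac{2}{n-2}\,\partial_\nu u + h_{\bg}\,u &= c\, u^{n/(n-2)} \quad \text{on } \Sigma,
\end{align*}
where $h_{\bg}$ is the mean curvature and $c$ is a constant. Solutions are critical points of the relative Yamabe functional whose infimum is the relative Yamabe invariant $Y(\overline{X},\Sigma,[\bg])$, the boundary $L^{2(n-1)/(n-2)}$-constraint making the problem critical for the trace Sobolev embedding. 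The hypothesis that this invariant is finite is exactly what makes the variational problem well-posed.

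Next I would establish the standard threshold comparison: one always has $Y(\overline{X},\Sigma,[\bg])\leq Y(\mathbb{B}^n,\mathbb{S}^{n-1})$, obtained by inserting suitably scaled and rescaled conformal images of the extremal bubbles for the half-space into the functional. The heart of the argument is the strict inequality $Y(\overline{X},\Sigma,[\bg]) < Y(\mathbb{B}^n,\mathbb{S}^{n-1})$; by the Cherrier-type concentration-compactness analysis, strict inequality implies that the infimum is attained by a positive function, and a standard Moser iteration plus elliptic boundary regularity gives smoothness of the minimizer. This reduces the theorem to proving strict inequality in every geometric configuration covered by the hypothesis.

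The bulk of the work is then a case analysis using refined test functions concentrating at a boundary point $p\in\Sigma$. Using normal Fermi coordinates near $p$ and taking $u$ to be a truncated conformal bubble, the expansion of the energy produces local geometric correction terms (traceless second fundamental form at $p$, a trace-free Ricci-type term, and in higher dimensions the Weyl tensor) whose sign and order govern strict inequality. This is how Escobar \cite{E} handles $n=3,4$; $n=5$ with umbilic boundary; $n\geq 6$ with a non-umbilic point; and the locally conformally flat umbilic case. Marques, Almaraz and Chen dispose of further dimensions by sharper test-function expansions and by a positive-mass reduction whenever the local expansion vanishes to all relevant orders.

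The main obstacle is exactly the remaining configurations where the pointwise local geometry at every boundary point is too flat for a test-function bubble to produce a sign, which is the analogue of the resolution of the classical Yamabe problem via the positive mass theorem. Since the required boundary positive mass theorem was not available, the plan would be to replace it with the topological argument of Bahri-Coron as implemented by Mayer and Ndiaye \cite{MN}. The idea is to argue by contradiction: assuming the infimum is not attained, every Palais-Smale sequence for the Cherrier-Escobar functional decomposes as a finite sum of bubbles concentrating at boundary points; one builds from these the space of multi-bubble almost-critical configurations and studies its topology. A Morse-theoretic deformation, together with the non-triviality of a certain equivariant homology class of this space of concentrations along the boundary $\Sigma$, produces a forbidden homotopy type and hence a contradiction. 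Once this final case is closed, one concludes in all remaining dimensions and configurations, establishing existence of the desired scalar-flat, constant-mean-curvature metric and completing Theorem \ref{CherrierEscobar}.
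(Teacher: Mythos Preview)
Your outline is an accurate high-level summary of how Theorem~\ref{CherrierEscobar} is established in the literature, but note that the paper does not itself give a proof of this statement: it is quoted as a known result, with the paragraph preceding the theorem attributing the various cases to Escobar~\cite{E}, Marques~\cite{M1,M2}, Almaraz~\cite{A}, Chen~\cite{C}, and finally Mayer--Ndiaye~\cite{MN} for the remaining configurations via the Bahri--Coron topological argument. Your sketch follows exactly that narrative, so there is nothing to compare against in the paper beyond the attribution you have already reproduced.
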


Let us now briefly recall the analytic aspect of this problem and give the precise definition of the {\it relative Yamabe invariant}. For $n\geq 3$, consider a metric $\wbg=u^{\frac{4}{n-2}}\bg\in[\bg]$ where $[\bg]$ is the conformal class of $\bg$ and $u$ is a smooth positive function on $\overline{X}$. Since the scalar curvature $R_{\wbg}$ and the (normalized) mean curvature $H_{\wbg}$ are given in terms of the metric $\bg$ by
\begin{eqnarray*}
R_{\wbg}=u^{-\frac{n+2}{n-2}}L_{\bg} u\quad\text{and}\quad H_{\wbg}=u^{-\frac{n}{n-2}}B_{\bg} u
\end{eqnarray*} 
where 
\begin{eqnarray*}
L_{\bg}u=-\frac{4(n-1)}{n-2}\Delta_{\bg}u+R_{\bg}u\quad\text{and}\quad B_{\bg}=\frac{2}{n-2}\frac{\partial u}{\partial N}+H_{\bg}u.
\end{eqnarray*}
The operator $L_{\bg}$ is the conformal Laplacian of $(\overline{X},\bg)$, $\Delta_{\bg}$ is the Laplace-Beltrami operator of $(\overline{X},\bg)$ and $B_{\bg}$ is the conformal Neumann operator, $N$ being the outward unit normal of $\Sigma:=\partial\overline{X}$ in $\overline{X}$ with respect to $\bg$. Then finding a metric $\wbg\in[\bg]$ which is scalar flat and which has constant mean curvature $\mu\in\mathbb{R}$ is equivalent to find a smooth positive solution $u$ on $\overline{X}$ satisfying the elliptic boundary value problem
$$\left\lbrace
\begin{array}{ll}
L_{\bg}u = 0 & \text{ on } \overline{X}\\
B_{\bg}u = \mu u^{\frac{n}{n-2}} & \text{ on }\Sigma.
\end{array}
\right.$$
The existence of solutions of this problem is equivalent to find critical points of the Escobar functional 
\begin{eqnarray*}
\mathcal{E}_{\bg}(f):=\frac{\int_{\overline{X}}\Big(4\frac{n-1}{n-2}|\nabla^{\bg}f|^2+R_{\bg}f^2\Big)\,dv_{bg}+2(n-1)\int_{\Sigma}H_{\bg}f^2\,ds_{\bg}}{\Big(\int_{\Sigma}f^{\frac{2(n-1)}{n-2}}\,ds_{\bg}\Big)^{\frac{n-2}{n-1}}}
\end{eqnarray*}
on the space
\begin{eqnarray*}
W_+^{1,2}(\overline{X}):=\{u\in W^{1,2}(\overline{X})\,/\,u>0\}
\end{eqnarray*}
where $W^{1,2}(\overline{X})$ denotes the usual Sobolev space of functions which are $L^2$-integrable with their first derivatives. Here $dv_{\bg}$ and $ds_{\bg}$ denote respectively the Riemannian volume elements of $(\overline{X},\bg)$ and $(\Sigma,\bg)$. In fact, if the relative Yamabe invariant 
\begin{eqnarray*}
Y(\overline{X},\Sigma,[\bg]):=\inf_{f\in W^{1,2}_+(\overline{X})}\,\mathcal{E}_{\bg}(f)
\end{eqnarray*}
is finite and if $u$ denotes a positive smooth minimizer $u$ of $\mathcal{E}_{\bg}$ which can be chosen such that
\begin{eqnarray*}
\int_{\Sigma}u^{\frac{2(n-1)}{(n-2)}}\,ds_{\bg}=1
\end{eqnarray*} 
then the well-defined Riemannian metric $\wbg=u^{\frac{4}{n-2}}\bg$ satisfies
\begin{eqnarray*}
R_{\wbg}=0,\quad H_{\wbg}=\frac{1}{2(n-1)}Y(\overline{X},\Sigma,[\bg])\quad\text{and}\quad Vol(\Sigma,\wbg)=1.
\end{eqnarray*}
Note that the relative invariant is a conformal invariant of $(\overline{X}^n,\bg)$ and as observed in \cite{E2} it can be infinite. In \cite{E}, Escobar showed that
\begin{eqnarray}\label{EscIn}
Y(\overline{X},\Sigma,[\bg])\leq Y(\overline{B}^n,\mathbb{S}^{n-1},[g_0])=2(n-1)\omega_{n-1}^{\frac{1}{n-1}}
\end{eqnarray}
where $\overline{B}^n$ denotes the Euclidean ball endowed with the flat metric $g_0$ and $\omega_{n-1}$ is the volume of $(\mathbb{S}^{n-1},g_0)$. It is now immediate to prove
\begin{proposition}\label{AdaptedMetric}
Let $(X^n,g_+)$ be a Poincar\'e-Einstein manifold of class $C^2$ with a smooth representative in its conformal infinity $(\Sigma,[\gamma])$. Assume also that the Yamabe invariant $Y(\Sigma,[\gamma])$ is non-negative. Then there is a conformal compactification $\overline{g}=\rho^2g_+$, at least $C^{3,\alpha}$ up to the boundary with $\alpha\in]0,1[$ such that
\begin{eqnarray}\label{ScalFlat1}
R_{\bg}=0,\quad H_{\bg}=\frac{1}{2(n-1)}Y(\overline{X},\Sigma,[\bg])\quad\text{and}\quad Vol(\Sigma,\bg)=1.
\end{eqnarray}
\end{proposition}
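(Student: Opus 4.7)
The plan is to apply the Cherrier--Escobar existence theorem (Theorem \ref{CherrierEscobar}) to a suitable initial compactification of $g_+$, and then verify that the conformal change produced by the minimizer preserves the defining-function structure. First, I would pick a smooth defining function $\rho_0$ such that the induced boundary metric $\rho_0^2 g_+|_\Sigma$ is the prescribed smooth representative of $[\gamma]$; the $C^2$ regularity of $g_+$ then ensures that the initial compactification $\bg_0 := \rho_0^2 g_+$ is of class $C^2$ up to $\Sigma$. Everything that follows will take place in the conformal class $[\bg_0]$.

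Second, I would verify that the relative Yamabe invariant $Y(\overline{X},\Sigma,[\bg_0])$ is a real number, so that Theorem \ref{CherrierEscobar} is applicable. An upper bound is provided by the Escobar inequality (\ref{EscIn}). For the lower bound, since \cite{E2} shows that the invariant can in principle be $-\infty$, the hypothesis $Y(\Sigma,[\gamma]) \geq 0$ is the crucial ingredient: combined with a standard trace-Sobolev inequality on $(\overline{X}, \bg_0)$, it bounds the Escobar functional $\mathcal{E}_{\bg_0}$ from below on $W^{1,2}_+(\overline{X})$. With finiteness in hand, Theorem \ref{CherrierEscobar} yields a positive minimizer $u$, which can be normalized so that $\int_\Sigma u^{2(n-1)/(n-2)}\,ds_{\bg_0} = 1$.

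Third, I would set $\bg := u^{4/(n-2)} \bg_0$ and read off the three identities of (\ref{ScalFlat1}) directly from the conformal transformation formulas for $R$ and $H$ collected at the end of Section \ref{CherrierEscobar}. To confirm that $\bg$ is genuinely a compactification of the form $\rho^2 g_+$, take $\rho := u^{2/(n-2)} \rho_0$; since $u > 0$ on $\overline{X}$, the function $\rho$ is positive on the interior, vanishes on $\Sigma$, and satisfies $d\rho \neq 0$ on $\Sigma$, so it is a defining function. Regularity $\bg \in C^{3,\alpha}$ then follows from standard Schauder bootstrap applied to the semilinear oblique-derivative boundary value problem $L_{\bg_0} u = 0$, $B_{\bg_0} u = \mathrm{const}\cdot u^{n/(n-2)}$, promoting $u$ from its a priori $C^{2,\alpha}$ regularity up to $C^{3,\alpha}(\overline{X})$ (cf.\ \cite{Ch,E}).

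The main obstacle is really just the finiteness argument for $Y(\overline{X},\Sigma,[\bg_0])$, where the non-negativity of $Y(\Sigma,[\gamma])$ must be used in an essential way; once this is settled, the remainder of the proposition is a formal consequence of Theorem \ref{CherrierEscobar} combined with the conformal transformation identities already tabulated in this section.
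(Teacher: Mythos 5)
Your skeleton (establish finiteness of the relative Yamabe invariant, apply Theorem \ref{CherrierEscobar}, normalize the minimizer, and read off (\ref{ScalFlat1})) is the same as the paper's, but the step you yourself flag as ``the main obstacle'' is precisely where your argument fails. No standard trace--Sobolev inequality bounds $\mathcal{E}_{\bg_0}$ from below using only $Y(\Sigma,[\gamma])\geq 0$: the denominator of $\mathcal{E}_{\bg_0}$ sees only the boundary trace of $f$, so if the first Dirichlet eigenvalue of the conformal Laplacian $L_{\bg_0}$ on $\overline{X}$ is negative, testing with approximations of the corresponding eigenfunction (which vanishes on $\Sigma$) drives the quotient to $-\infty$. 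This is exactly the mechanism behind Escobar's examples in \cite{E2}, and it is completely insensitive to the intrinsic Yamabe invariant of the boundary; a trace inequality controls boundary norms by interior norms, not the reverse. Converting the hypothesis $Y(\Sigma,[\gamma])\geq 0$ on the conformal infinity into positivity in the bulk is a genuinely nontrivial step that uses the Einstein equation: the paper invokes Theorem $4$ of Hijazi--Montiel \cite{HM} (resting on Lee-type eigenfunction compactifications of Poincar\'e--Einstein metrics) to produce a conformal compactification $\bg_1\in[g_+]$ with $R_{\bg_1}\geq 0$ and $H_{\bg_1}\geq 0$, which makes $\mathcal{E}_{\bg_1}$ manifestly nonnegative, hence $Y(\overline{X},\Sigma,[\bg_1])$ finite, and indeed positive when $Y(\Sigma,[\gamma])>0$. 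Without this input (or an equivalent substitute) your finiteness claim is unsupported, and the rest of the proof has nothing to stand on.

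Your regularity argument is also insufficient. A Schauder bootstrap for the problem $L_{\bg_0}u=0$, $B_{\bg_0}u=\mathrm{const}\cdot u^{n/(n-2)}$ can never promote $u$ beyond the regularity of the coefficients, and since $g_+$ is only conformally compact of class $C^2$, the background $\bg_0$ is merely $C^2$ up to $\Sigma$: the zeroth-order coefficient $R_{\bg_0}$ is only continuous, so $u$ is at best $C^{2,\alpha}$ and $\bg$ at best of class $C^2$ by this route. The asserted $C^{3,\alpha}$ regularity up to the boundary is not an elliptic bootstrap fact for the Yamabe-type equation; it comes from the boundary regularity theory for conformally compact Einstein metrics of Chru\'sciel--Delay--Lee--Skinner \cite{CDLS}, as the paper points out in the remark following Proposition \ref{AdaptedMetric} (cf.\ \cite{GH}). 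Your construction of the defining function $\rho=u^{2/(n-2)}\rho_0$ and the normalization of the minimizer are fine, but those were never the difficult points.
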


\begin{proof}
The first thing to note is that under our assumptions there exists a conformal metric $\bg_1\in[g_+]$ such that $(\overline{X},\bg_1)$ is a smooth compact Riemannian manifold with boundary such that $R_{\bg_1}\geq 0$ and $H_{\bg_1}\geq 0$ (see Theorem $4$ in \cite{HM} for more details). From \cite{E}, this fact is equivalent to the non negativity of $Y(\overline{X},\Sigma,[\bg_1])$. In fact, as noticed in \cite{HM}, if $Y(\Sigma,[\gamma])>0$ we have $Y(\overline{X},\Sigma,[\bg_1])>0$. In any case, the relative Yamabe invariant of $\overline{X}$ with respect to the conformal class of $\bg_1$ is finite so that Theorem \ref{CherrierEscobar} ensures the existence of a metric $\bg\in[\bg_1]$ satisfying (\ref{ScalFlat1}). 
\end{proof}

\begin{remark}
\begin{enumerate}
\item It is obvious to see that Proposition \ref{AdaptedMetric} holds if we replace the assumption of the non-negativity of $Y(\Sigma,[\gamma])$ by the existence of a conformal compactification of $X$ with finite relative Yamabe invariant. However since our main result (see Theorem \ref{main}) is significant only when $Y(\Sigma,[\gamma])\geq 0$, we will always prefer this hypothesis. 

\item As mentioned in \cite{GH}, the regularity statement follows from a result of Chru\'sciel, Delay, Lee and Skinner \cite{CDLS}. 
\end{enumerate}
\end{remark}


\section{Proof of Theorem \ref{main}}


The proof is inspired by the method of Gursky-Han \cite{GH} initially introduced by Obata \cite{O} to study Yamabe metrics for Einstein manifolds. The main difference is that in our case the boundary is not totally geodesic but is only totally umbilic with constant mean curvature. Indeed, from Proposition \ref{AdaptedMetric}, let $\bg=\rho^2 g_+$ be the compactification of $X$ satisfying (\ref{ScalFlat1}). We now repeat the calculations in \cite{GH}. Indeed, if $E_{\bg}$ denotes the Einstein tensor of $(\overline{X},\bg)$, we get from the fact that $(X,g_+)$ is Einstein that
\begin{eqnarray}\label{Einst1}
E_{\bg}=-(n-2)\rho^{-1}\big(\nabla_{\bg}^2\rho-\frac{1}{n}(\Delta_{\bg}\rho)\bg\big).
\end{eqnarray}
where $\nabla_{\bg}^2$ is the Hessian operator of $(\overline{X},\bg)$
Now for $\ve>0$ small enough, we define
\begin{eqnarray*}
X_\ve:=\{x\in X\,/\,d_{\bg}(x,\Sigma)\geq\ve\}
\end{eqnarray*} 
where $d_{\bg}$ is te distance to the boundary with respect to $\bg$. Multiplying (\ref{Einst1}) by $\rho$ then taking the scalar product with $E_{\bg}$ and finally integrating on $X_\ve$ give
\begin{eqnarray*}
\int_{X_\ve}|E_{\bg}|^2_{\bg}\rho\,dv_{\bg}=-(n-2)\int_{X_\ve}\bg^{ik}\bg^{jl}(E_{\bg})_{ij}(\nabla^{\bg}_k\nabla^{\bg}_l\rho)\,dv_{\bg}.
\end{eqnarray*}
Here we also used the fact that the Einstein tensor is trace-free and $\nabla^{\bg}$ denotes the covariant derivative with respect to $\bg$. Now, we integrate by parts the right-hand side of this expression and we deduce that 
\begin{eqnarray*}
\int_{X_\ve}|E_{\bg}|^2_{\bg}\rho\,dv_{\bg}=-(n-2)\int_{\partial X_\ve}\bg^{jl}(E_{\bg})_{ij}(\nabla^{\bg}_l\rho) N^i_\ve\,ds_{\bg}(\ve),
\end{eqnarray*}
where $N_\ve$ is the outward unit normal and $ds_{\bg}(\ve)$ is the area form on $\partial X_\ve$ with respect to $\bg$. Note that the interior term in the integration by parts is zero since from the contracted second Bianchi identity 
we have
\begin{eqnarray*}
\bg^{ik}\nabla^{\bg}_k(E_{\bg})_{ij}=\frac{n-2}{2n}\nabla^{\bg}_jR_{\bg}=0
\end{eqnarray*} 
because $R_{\bg}=0$. Now once again from (\ref{Einst1}) we rewrite the previous equality as
\begin{eqnarray*}
\int_{E_\ve}|E_{\bg}|^2_{\bg}\rho\,dv_{\bg}=-(n-2)\int_{X_\ve}E_{\bg}(\nabla^{\bg}\rho,N_\ve)\,ds_{\bg}(\ve).
\end{eqnarray*}

Now as noticed in \cite{GH}, the fact that $g_+$ has constant scalar curvature implies that this metric is the unique solution of the Loewner-Nirenberg problem on $(\overline{X},\Sigma,\bg)$ (see \cite{LN}) that is it is the unique complete metric of constant scalar curvature defined in $X$ which is conformal to $\bg$. It turns out this solution has a particular asymptotic expansion near the boundary.  For example, it is proved in \cite{G} that 
\begin{eqnarray*}
\rho(x)=r-\frac{H_{\bg}}{2}r^2+\frac{R_{\gamma}}{6(n-2)}r^3+O(r^{3+\alpha})
\end{eqnarray*}
where $R_\gamma$ is the scalar curvature of $\Sigma$ for the metric $\gamma=\bg_{|\Sigma}$. In fact we only need the fact that $\rho(x)=r+O(r^2)$ and that $\nabla^{\bg}\rho=\frac{\partial}{\partial r}+O(r)$ to get
\begin{eqnarray*}
\int_{X_\ve}|E_{\bg}|^2_{\bg}\rho\,dv_{\bg}=(n-2)\int_{\partial X_\ve}E_{\bg}(N_\ve,N_\ve)\,ds_{\bg}(\ve)+O(\ve)
\end{eqnarray*} 
since $N_{\ve}=-\frac{\partial}{\partial r}$ on $\partial X_{\ve}$. Finally letting $\ve\rightarrow 0$ leads to 
\begin{eqnarray}\label{NiceForm}
\int_{X}|E_{\bg}|^2_{\bg}\rho\,dv_{\bg}=(n-2)\int_{\Sigma}E_{\bg}(N,N)\,ds_{\bg}.
\end{eqnarray}
Now from the properties (\ref{ScalFlat1}) of the metric $\bg$ and since the boundary is totally umbilic, the Gau\ss{} formula reads as
\begin{eqnarray}\label{ENN}
E_{\bg}(N,N) & = & \frac{1}{2}\big((n-1)(n-2)H_{\bg}^2-R_\gamma\big)\\
& = & \frac{1}{2}\Big(\frac{n-2}{4(n-1)}Y(\overline{X},\Sigma,[\bg])^2-R_\gamma\Big).
\end{eqnarray}
Using this expression in (\ref{NiceForm}) gives
\begin{eqnarray}\label{NF2}
\frac{(n-2)^2}{8(n-1)}Y(\overline{X},\Sigma,[\bg])^2=\int_{X}|E_{\bg}|^2_{\bg}\rho\,dv_{\bg}+\frac{(n-2)}{2}\int_\Sigma R_\gamma\,ds_{\gamma}
\end{eqnarray}
because $Vol(\Sigma,\gamma)=1$ and since the Yamabe invariant of $(\Sigma,\gamma)$ is defined by (see \cite{LP} for example)
\begin{eqnarray*}
Y(\Sigma,[\gamma]):=\inf_{\overline{\gamma}\in[\gamma]}\frac{\int_\Sigma R_{\overline{\gamma}}\,ds_{\overline{\gamma}}}{Vol(\Sigma,\overline{\gamma})^{\frac{n-3}{n-1}}}.
\end{eqnarray*}
we obtain
\begin{eqnarray*}
Y(\overline{X},\Sigma,[\bg])^2\geq 4\frac{n-1}{n-2}Y(\Sigma,[\gamma]).
\end{eqnarray*}
The expression for $n=3$ just follows from the fact that the Yamabe invariant of $\Sigma$ is exactly $4\pi\chi(\Sigma)$. 

If now we assume that equality is achieved then it is clear from (\ref{NF2}) that the metric $\bg=\rho^2 g_+$ is Ricci-flat and from (\ref{ENN}) that the boundary is totally umbilic with constant mean curvature $H_{\bg}=1$ (up to a rescaling of the metric). Moreover we also have
\begin{eqnarray*}
\nabla_{\bg}^2\rho=\frac{1}{n}(\Delta_{\bg}\rho)\bg
\end{eqnarray*}
because of (\ref{Einst1}). From the Ricci identity and the fact that $\bg$ is Ricci flat we deduce that $\nabla^{\bg}(\Delta_{\bg}\rho)=\frac{1}{n}\nabla^{\bg}(\Delta_{\bg}\rho)$ which implies since $n\geq 3$ that $\Delta_{\bg}\rho$ is a non zero constant. Indeed if $\Delta_{\bg}\rho=0$, $\rho$ has to vanish on the whole of $\overline{X}$ since $\rho_{|\Sigma}=0$ and this contadicts the fact that $\rho$ is a defining function for the boundary $\Sigma$. Then we can argue as in \cite[p.2641]{WX} to conclude that $(X^n,\bg)$ is isometric to the unit ball of $\mathbb{R}^n$ and so $(X^n,g_+)$ has to be the hyperbolic space. The converse is obvious.


\section{Proof of Corollary \ref{RigHyp}}


If $(X^n,g_+)$ is a Poincar\'e-Einstein manifold with the unit sphere as its conformal infinity, we have (see \cite{LP} for example)
\begin{eqnarray*}
Y(\Sigma,[\gamma])=Y(\mathbb{S}^{n-1},[g_0])=(n-1)(n-2)\omega_{n-1}^{\frac{2}{n-1}}
\end{eqnarray*}
where $[g_0]$ is the conformal class of the standard metric on the unit sphere. Now from our main result and the Escobar's inequality (\ref{EscIn}) we deduce that
\begin{eqnarray*}
4\frac{n-1}{n-2}Y(\Sigma,[\gamma])=4(n-1)^2\omega_{n-1}^{\frac{2}{n-1}}\leq Y(\overline{X},\Sigma,[\bg])^2\leq 4(n-1)^2\omega_{n-1}^{\frac{2}{n-1}}
\end{eqnarray*}
so that equality is achieved in Theorem \ref{main} and then we conclude that $(X^n,g_+)$ has to be the hyperbolic space. 
\qed


\end{document}